\documentclass[11pt]{amsart}

\numberwithin{equation}{section}
\usepackage{mathrsfs,txfonts}
\usepackage[dvips]{graphicx}
\usepackage{latexsym,bm}
\usepackage{amsfonts}
\usepackage{indentfirst}
\usepackage{amsthm}
\usepackage{cases}
\usepackage{pifont}
\usepackage{amsmath}
\usepackage{amssymb}
\usepackage{enumerate}

\allowdisplaybreaks
\newtheorem{lem}{\quad\textbf{\Large Lemma}}[section]
\newtheorem{thm}[lem]{\quad\textbf{\Large Theorem}}
\newtheorem{cor}[lem]{\quad\textbf{\Large Corollary}}
\newtheorem{prop}[lem]{\quad\textbf{\Large Proposition}}
\newtheorem{de}[lem]{Definition}
\def\squarebox#1{\hbox to #1{\hfill\vbox to #1{\vfill}}}

\begin{document}
\title[On a circular formula]
{A note on a generalized  circular summation formula of theta
functions}
\author{Jun-Ming Zhu  }
\address{Department of Mathematics, Luoyang Normal University,
Luoyang City, Henan Province 471022, China}
\address{Department of Mathematics, East China Normal University,
Shanghai City 200241, China}

\email{junming\_zhu@163.com}

\thanks{The author is supported by the
 Natural Science Foundation of China
 (Grant No. 11171107 ) 
and the Foundation of Fundamental and Advanced Research of Henan Province (Grant No. 112300410024 ).
}


\begin{abstract}
In this note, we make a correction of the imaginary transformation
formula of Chan and Liu's circular formula of theta functions. We
also get the imaginary transformation formulaes for a type of
generalized cubic theta functions.
\\
\\Keywords: Theta function, Circular summation,
 Jacobi imaginary transformation, Cubic theta function.
 \\ MSC (2010):   11B65, 11F27,
 05A19.

\end{abstract}
 \maketitle

\section{Introduction}
\setcounter{equation}{0}

Throughout  we put $q=e^{2\pi i\tau}$, where $\mbox{Im}\ \tau>0$. As
usual, the Jacobi theta functions $\theta_k(z|\tau)$ for $k\in\{1,
2,3,4\}$ are defined as:
\begin{eqnarray*}     
\theta_1(z|\tau) &=&-iq^{1\over8}\sum\limits_{n=-\infty}^{\infty}
                           (-1)^nq^{n(n+1)\over2}e^{(2n+1)iz}
                  ,\\
\theta_2(z|\tau)&=&q^{1\over8}\sum\limits_{n=-\infty}^{\infty}
                      q^{n(n+1)\over2}e^{(2n+1)iz}
                 ,\\
\theta_3(z|\tau)&=&\sum\limits_{n=-\infty}^{\infty}
                      q^{n^2\over2}e^{2niz}
                      , \\
\theta_4(z|\tau)&=&\sum\limits_{n=-\infty}^{\infty}
                      (-1)^nq^{n^2\over2}e^{2niz}.
\end{eqnarray*}

Circular summation of theta functions has been an interesting topic.
 In their paper \cite{chsl}, S.H. Chan
and Z.-G. Liu proved the following remarkable circular summation  formula of
theta functions \cite[Theorem 4]{chsl}.
\begin{thm} \label{chanliu}
Suppose $y_1,y_2,\cdots,y_n$ are $n$ complex numbers such that $y_1+
y_2+ \cdots+ y_n=0$. Then we have
\begin{equation}\label{chli}
\sum_{k=0}^{mn-1}\prod_{j=1}^{n} \theta_3\left(z+y_j+{k\pi\over
mn}\big|\tau\right)=
G_{m,n}(y_1,y_2,\cdots,y_n|\tau)\theta_3(mnz|m^2n\tau),
\end{equation}
where
\begin{equation}\label{gmn}
G_{m,n}(y_1,y_2,\cdots,y_n|\tau)=mn\sum_{\substack{r_1,\cdots,r_n=-\infty\\r_1+\cdots+r_n=0}}^\infty
q^{{1\over2}(r_1^2+\cdots+r_n^2)}e^{2i(r_1y_1+\cdots+r_ny_n)}.
\end{equation}
\end{thm}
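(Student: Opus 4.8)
The plan is to prove the identity by expanding the left-hand side directly as a multiple Fourier series in $z$, applying a root-of-unity filter to the sum over $k$, and then reorganizing the resulting lattice sum using the hypothesis $y_1+\cdots+y_n=0$. First I would denote the left-hand side by $F(z)$ and substitute the defining expansion $\theta_3(w|\tau)=\sum_{r}q^{r^2/2}e^{2irw}$ into each of the $n$ factors. Multiplying out the product over $j$ and collecting exponentials, the term indexed by $(r_1,\ldots,r_n)$ carries the weight $q^{\frac12(r_1^2+\cdots+r_n^2)}e^{2i(r_1y_1+\cdots+r_ny_n)}$ together with the $z$-factor $e^{2i(r_1+\cdots+r_n)z}$ and the $k$-factor $e^{2\pi i k(r_1+\cdots+r_n)/(mn)}$. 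Summing over $k$ from $0$ to $mn-1$ and invoking the elementary identity that $\sum_{k=0}^{mn-1}e^{2\pi i kM/(mn)}$ equals $mn$ when $mn\mid M$ and vanishes otherwise, only the multi-indices with $r_1+\cdots+r_n\equiv 0\pmod{mn}$ survive; thus $F(z)$ becomes $mn$ times the sum of $q^{\frac12\sum r_j^2}e^{2i\sum r_jy_j}e^{2i(\sum r_j)z}$ over all such multi-indices.

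The decisive step is to reorganize this sum according to the sector $\sum_j r_j=mn\ell$, $\ell\in\Z$. On each sector I would apply the substitution $r_j=s_j+m\ell$, which is a bijection onto the multi-indices satisfying $\sum_j s_j=0$. Because $\sum_j y_j=0$, the linear terms cancel, giving $\sum_j r_j^2=\sum_j s_j^2+nm^2\ell^2$ and $\sum_j r_jy_j=\sum_j s_jy_j$. Hence the $\ell$-sector contributes exactly $q^{m^2n\ell^2/2}\,G_{m,n}(y_1,\ldots,y_n|\tau)$ multiplied by the $z$-factor $e^{2imn\ell z}$. Summing over $\ell$ and recognizing $\sum_{\ell}q^{m^2n\ell^2/2}e^{2imn\ell z}=\theta_3(mnz|m^2n\tau)$ then yields the claimed formula.

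I expect the main obstacle to be precisely this reorganization: it is the only place where the hypothesis $y_1+\cdots+y_n=0$ enters, and one must check that the shift $r_j\mapsto s_j+m\ell$ simultaneously removes the cross term $2m\ell\sum_j s_j$ from the Gaussian exponent and the term $m\ell\sum_j y_j$ from the linear exponent. It is exactly this double cancellation that decouples every sector into the same Gaussian factor $G_{m,n}$, so that the $z$-dependence assembles into a single $\theta_3$ at modulus $m^2n\tau$.

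As an alternative I would note a more conceptual route via quasi-periodicity. Shifting $z$ by $\pi/(mn)$ permutes the summands of $F$ cyclically (using $\theta_3(\,\cdot+\pi\,)=\theta_3(\,\cdot\,)$), so $F$ has period $\pi/(mn)$; moreover, iterating $\theta_3(w+\pi\tau|\tau)=q^{-1/2}e^{-2iw}\theta_3(w|\tau)$ and again using $\sum_j y_j=0$ to kill the $k$-dependence of the resulting multiplier gives $F(z+m\pi\tau)=q^{-m^2n/2}e^{-2imnz}F(z)$. These two functional equations force the Fourier coefficients of $F$ to obey the recurrence characterizing $\theta_3(mnz|m^2n\tau)$, so $F(z)=c\,\theta_3(mnz|m^2n\tau)$ for some $z$-independent constant $c$; extracting the constant Fourier coefficient of $F$ then identifies $c$ with $G_{m,n}$, recovering the same conclusion.
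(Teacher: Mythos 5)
Your proposal is correct, but it cannot be compared to a proof in the paper, because the paper contains none: Theorem \ref{chanliu} is simply quoted as Theorem 4 of Chan and Liu \cite{chsl}, with only the remark that such identities can be proved by the theory of elliptic functions. Your primary argument — expand each $\theta_3$ factor into its series, filter the sum over $k$ with the root-of-unity identity so that only $r_1+\cdots+r_n\equiv 0 \pmod{mn}$ survives, split those lattice points into sectors $\sum_j r_j=mn\ell$, and shift $r_j=s_j+m\ell$ — is a complete, self-contained, elementary proof: the shift is a bijection onto $\{\sum_j s_j=0\}$, and the reassembly $\sum_\ell q^{m^2n\ell^2/2}e^{2imn\ell z}=\theta_3(mnz|m^2n\tau)$ is exact (absolute convergence justifies all rearrangements). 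One small correction of attribution: the two cancellations have different sources — the cross term $2m\ell\sum_j s_j$ in the Gaussian exponent vanishes because $\sum_j s_j=0$ on the sector, while the term $m\ell\sum_j y_j$ vanishes because of the hypothesis $y_1+\cdots+y_n=0$; your third paragraph states this correctly, though your second paragraph credits both to the hypothesis on the $y_j$. It is worth noting that your computational route is exactly the style of argument the author uses to prove the corrected formula \eqref{xsf} in Theorem \ref{cli} (expand, split the summation index, equate Fourier coefficients), whereas your alternative sketch via the two quasi-periods $\pi/(mn)$ and $m\pi\tau$ is the elliptic-function argument the paper attributes to \cite{chsl}. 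The first route buys transparency — one sees precisely where $y_1+\cdots+y_n=0$ enters — while the second buys brevity and yields $F(z)=c\,\theta_3(mnz|m^2n\tau)$ structurally, with the series computation needed only to identify the constant $c$ as $G_{m,n}$.
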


This theorem generalizes the fundamental result in Zeng's \cite{ze}.
Zeng \cite{ze} is motivated  by Ramanujan's circular summation
formula \cite[page 54]{ram} (see also \cite{chln}) and Boon et al.
\cite[Eq. (7)]{bgzz}. For detailed account of the topic of the
circular summation of theta functions, we refer the readers to
\cite{chln} and \cite{chsl}.

Applying the Jacobi imaginary transformation to \eqref{chli}, S.H.
Chan and Z.-G. Liu  got the dual form of Theorem \ref{chanliu}, i.e.
\cite[Theorem 5]{chsl}.
\begin{thm} \label{chanliui},
Suppose $y_1,y_2,\cdots,y_n$ are $n$ complex numbers such that $y_1+
y_2+ \cdots+ y_n=0$. Then we have
\begin{equation}\label{chlii}
\sum_{k=0}^{mn-1}q^{k^2\over2}e^{2kiz}\prod_{j=1}^{n}
\theta_3\left(mz+(y_j+km)\pi\tau\big|m^2n\tau\right)=
F_{m,n}(y_1,y_2,\cdots,y_n|\tau)\theta_3(z|\tau),
\end{equation}
where
\begin{equation*}
F_{m,n}(y_1,y_2,\cdots,y_n|\tau)=\frac{(-i\tau)^{1-n\over2}}{(m^2n)^{n\over2}}q^{-\frac{y_1^2+y_2^2+\cdots+y_n^2}{2m^2n}}
G_{m,n}\left({y_1\pi\over m^2n},\cdots,{y_n\pi\over
m^2n}\Big|-{1\over m^2n\tau}\right).
\end{equation*}
\end{thm}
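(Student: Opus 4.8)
The plan is to derive \eqref{chlii} directly from \eqref{chli} by means of the Jacobi imaginary transformation for $\theta_3$, which in the present normalization reads
\begin{equation*}
\theta_3(z|\tau)=(-i\tau)^{-1/2}\,e^{-iz^2/(\pi\tau)}\,\theta_3\!\left(\tfrac{z}{\tau}\Big|{-\tfrac1\tau}\right),
\end{equation*}
a statement one recovers from $\theta_3(z|\tau)=\sum_n e^{\pi i\tau n^2+2inz}$ by Poisson summation. I would first note that \eqref{chli} holds for every modulus in the upper half-plane, and that $\tau\mapsto-1/(m^2n\tau)$ preserves the upper half-plane. In \eqref{chli} I therefore substitute $\tau\to\sigma:=-1/(m^2n\tau)$, $y_j\to y_j\pi/(m^2n)$, and $z\to z/(mn\tau)$. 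These choices are made precisely so that $-1/\sigma=m^2n\tau$ and $m^2n\sigma=-1/\tau$, which is what forces every theta factor, after transformation, to carry the modulus appearing in \eqref{chlii}.

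Next I would apply the transformation above to each of the $n$ factors on the left and to the single factor on the right. On the left a typical factor becomes $\theta_3(w_{jk}|\sigma)$ with $w_{jk}=z/(mn\tau)+y_j\pi/(m^2n)+k\pi/(mn)$; since $1/\sigma=-m^2n\tau$, a short computation gives $w_{jk}/\sigma=-(mz+(y_j+km)\pi\tau)$, so by the evenness of $\theta_3$ the transformed factor is exactly $\theta_3\!\left(mz+(y_j+km)\pi\tau\big|m^2n\tau\right)$, as required in \eqref{chlii}. On the right, after the substitution the argument is $\theta_3(z/\tau\,|\,-1/\tau)$, which the transformation turns into $(i/\tau)^{-1/2}e^{iz^2/(\pi\tau)}\theta_3(z|\tau)$, while the factor $G_{m,n}$ reappears as $G_{m,n}(y_1\pi/(m^2n),\ldots,y_n\pi/(m^2n)\,|\,-1/(m^2n\tau))$, which is precisely the $G_{m,n}$ occurring inside $F_{m,n}$.

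The decisive simplification lies in the Gaussian prefactors produced on the left. Writing $a_k=z/(mn\tau)+k\pi/(mn)$ and $b_j=y_j\pi/(m^2n)$, one has $\sum_j w_{jk}^2=n\,a_k^2+2a_k\sum_j b_j+\sum_j b_j^2$, and here the hypothesis $y_1+\cdots+y_n=0$ annihilates the middle term. Consequently the product of the exponentials $e^{-iw_{jk}^2/(\pi\sigma)}$ factors into a $k$-dependent part that collapses to $e^{iz^2/(\pi\tau)}q^{k^2/2}e^{2kiz}$, reproducing exactly the weight $q^{k^2/2}e^{2kiz}$ standing in front of the product in \eqref{chlii}, and a $k$-independent part contributing a single power $q^{(y_1^2+\cdots+y_n^2)/(2m^2n)}$. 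The $n$ copies of $(-i\sigma)^{-1/2}$ combine to a common factor $(-i\sigma)^{-n/2}$ that may be pulled outside the $k$-sum.

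Finally I would equate the two transformed sides, cancel the common factor $e^{iz^2/(\pi\tau)}$ appearing on each, and solve for the sum in \eqref{chlii}. Moving the constants across the equation flips the sign of the $q$-power to $q^{-(y_1^2+\cdots+y_n^2)/(2m^2n)}$, and the surviving multiplicative constant is $(-i\sigma)^{n/2}(i/\tau)^{-1/2}$; using $-i\sigma=i/(m^2n\tau)$ together with $(-i)^{(1-n)/2}=i^{(n-1)/2}$ this reduces to $(-i\tau)^{(1-n)/2}/(m^2n)^{n/2}$, giving exactly $F_{m,n}$. I expect the only real obstacle to be the bookkeeping: keeping track of the several Gaussian exponentials and of the branches of the fractional powers of $i$ and $\tau$ so that they assemble into $(-i\tau)^{(1-n)/2}$, rather than any conceptual difficulty. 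The one genuinely load-bearing point is verifying that the cross-term vanishes by the constraint $y_1+\cdots+y_n=0$, since that is what makes the entire reduction go through.
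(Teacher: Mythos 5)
Your proposal is correct and follows essentially the same route as the paper: the paper presents this theorem as Chan and Liu's result obtained precisely by applying the Jacobi imaginary transformation to \eqref{chli}, which is what you carry out in detail (and your key steps — the vanishing of the cross term via $y_1+\cdots+y_n=0$, the identification $e^{i\pi k^2\tau}=q^{k^2/2}$, and the branch bookkeeping $i/\tau=1/(-i\tau)$ giving $(-i\tau)^{(1-n)/2}/(m^2n)^{n/2}$ — all check out). The paper itself omits these details and refers to Chan and Liu for the proof, so your write-up is a faithful filling-in of that cited argument.
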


There is also another
formula (\cite[Eq. (1.10)]{chsl}) for
$F_{m,n}(y_1,y_2,\cdots,y_n|\tau)$ in \cite[Theorem 5]{chsl} without proof. Substituting
\cite[Eq. (1.10)]{chsl} back into \eqref{chlii} (i.e.   \cite[Eq.
(1.8)]{chsl} ), and then, taking $m=2$ and $n=1$, we obtain
\begin{equation}\label{fan}
\theta_3(2z|4\tau)+\theta_2(2z|4\tau)=(1+q)\theta_3(z|\tau).
\end{equation}
But
\begin{eqnarray*}
\theta_3(z|\tau)&=&\sum\limits_{n=-\infty}^{\infty}
                      q^{n^2\over2}e^{2niz}\\
&=&\sum_{n=-\infty}^{\infty}q^{(2n)^2\over2}e^{4niz}+\sum_{n=-\infty}^{\infty}q^{(2n+1)^2\over2}e^{2(2n+1)iz}\\
&=&\theta_3(2z|4\tau)+\theta_2(2z|4\tau).
\end{eqnarray*}
Thus \eqref{fan} does not hold unless $q=0$, so we can conclude that
the formula \cite[Eq. (1.10)]{chsl} is incorrect.

 In Section 2, we will  make a correction of the formula
\cite[Eq. (1.10)]{chsl} with proof.

In Section 3, we will use Theorem \ref{cli} to deduce the imaginary
transformation formulaes of a type of generalized cubic theta
functions.

\section{Correction of the  formula
\cite[Eq. (1.10)]{chsl}}

We rewrite  Theorem \ref{chanliui} plus the formula for
$F_{m,n}(y_1,y_2,\cdots,y_n|\tau)$ \cite[Theorem 5]{chsl} in the
following version, where \eqref{xsf} is the correction of \cite[Eq.
(1.10)]{chsl}.
\begin{thm} \label{cli},
Suppose $y_1,y_2,\cdots,y_n$ are $n$ complex numbers such that $y_1+
y_2+ \cdots+ y_n=0$. Then we have
\begin{equation}\label{clii}
\sum_{k=0}^{mn-1}q^{k^2\over2}e^{2kiz}\prod_{j=1}^{n}
\theta_3\left(mz+y_j+km\pi\tau\big|m^2n\tau\right)=
F_{m,n}(y_1,y_2,\cdots,y_n|\tau)\theta_3(z|\tau),
\end{equation}
where
\begin{eqnarray}
\lefteqn{F_{m,n}(y_1,y_2,\cdots,y_n|\tau)}\notag\\
&=&\frac{(-i\tau)^{1-n\over2}}{(m^2n)^{n\over2}}e^{\frac{y_1^2+y_2^2+\cdots+y_n^2}{m^2n\pi\tau
i}}
   G_{m,n}\left({y_1\over m^2n\tau},{y_2\over m^2n\tau},\cdots,{y_n\over
   m^2n\tau}\Big|-{1\over m^2n\tau}\right)\label{xsg}\\
&=&\sum_{k=0}^{n-1}q^{-{m^2k^2\over2}}\sum_{\substack{r_1,r_2,\cdots,
r_n
=-\infty\\
r_1+r_2+\cdots +r_n=k}}^\infty
q^{{1\over2}m^2n(r_1^2+r_2^2+\cdots+r_n^2)}e^{-2i(r_1y_1+r_2y_2+\cdots+r_ny_n)}.
\label{xsf}
\end{eqnarray}
\end{thm}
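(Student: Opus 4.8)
The plan is to split the theorem into two stages. The first equality, between $F_{m,n}$ and the expression \eqref{xsg}, is nothing more than Chan and Liu's Theorem~\ref{chanliui} (their Theorem~5) after the harmless change of variables $y_j\mapsto y_j/(\pi\tau)$: this substitution turns the argument $mz+(y_j+km)\pi\tau$ of \eqref{chlii} into $mz+y_j+km\pi\tau$ as in \eqref{clii}, carries the prefactor $q^{-(y_1^2+\cdots+y_n^2)/(2m^2n)}$ into $e^{(y_1^2+\cdots+y_n^2)/(m^2n\pi\tau i)}$ (using $1/i=-i$), and sends the arguments $y_j\pi/(m^2n)$ of $G_{m,n}$ to $y_j/(m^2n\tau)$, all while preserving the constraint $\sum_j y_j=0$. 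Thus \eqref{clii} together with \eqref{xsg} may be taken as given, and the entire content of the correction is the \emph{second} equality, that \eqref{xsg} equals \eqref{xsf}.

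To prove \eqref{xsg}$=$\eqref{xsf} I would first unfold $G_{m,n}$ by its definition \eqref{gmn}. Writing $\tilde q=e^{-2\pi i/(m^2n\tau)}$ for the nome attached to the modulus $-1/(m^2n\tau)$, the factor $G_{m,n}\bigl(y_1/(m^2n\tau),\dots\big|-1/(m^2n\tau)\bigr)$ becomes $mn$ times the constrained Gaussian sum $S=\sum_{r}\exp\bigl(-\tfrac{\pi i}{m^2n\tau}\sum_j r_j^2+\tfrac{2i}{m^2n\tau}\sum_j r_jy_j\bigr)$, the sum ranging over the lattice $L=\{r\in\Z^n:\sum_j r_j=0\}$. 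Because $\sum_j y_j=0$, both the quadratic form and the linear phase live on the hyperplane $H=\{x\in\mathbb R^n:\sum_j x_j=0\}$ that contains $L$, and $L$ is a copy of the root lattice $A_{n-1}$ of covolume $\sqrt n$. I would then apply Poisson summation on $L$ inside $H$: with $a=\pi i/(m^2n\tau)$ and $b=y/(\pi m^2n\tau)$ the Gaussian-times-phase has Fourier transform $\hat f(\xi)=(\pi/a)^{(n-1)/2}e^{-\pi^2|b-\xi|^2/a}$, so $S=\tfrac{1}{\sqrt n}\sum_{w\in L^*}\hat f(w)$ becomes a Gaussian sum over the dual lattice $L^*=A_{n-1}^*$ with the modulus $m^2n\tau$ restored.

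Two bookkeeping points then complete the identification. First, the cross term $e^{\pi i m^2n\tau|b|^2}$ produced by completing the square equals $e^{i(y_1^2+\cdots+y_n^2)/(\pi m^2n\tau)}$, which is exactly the reciprocal of the prefactor $e^{(y_1^2+\cdots+y_n^2)/(m^2n\pi\tau i)}$ of \eqref{xsg}, so the two cancel; and the product of the normalization $(-i\tau)^{(1-n)/2}(m^2n)^{-n/2}$ of \eqref{xsg} with the factor $mn$ from $G_{m,n}=mn\,S$ and with the Poisson constant $\tfrac{1}{\sqrt n}(m^2n)^{(n-1)/2}(-i\tau)^{(n-1)/2}$ collapses to $1$, using $mn=\sqrt n\,(m^2n)^{1/2}$. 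Second, I would parametrize $L^*$ as the image of $\Z^n$ under the orthogonal projection $\pi_H(s)=s-\tfrac{1}{n}(\sum_j s_j)\mathbf 1$ onto $H$; since $\pi_H(s)=\pi_H(s')$ exactly when $s-s'\in\Z\mathbf 1$, and the coset of $L$ in $L^*$ is detected by $\sum_j s_j\bmod n$, the dual sum splits into the $n$ cosets $k=0,\dots,n-1$, the $k$-th being the bijective image of $\{r\in\Z^n:\sum_j r_j=k\}$. The completion-of-square identity $\tfrac{m^2n}{2}\sum_j r_j^2-\tfrac{m^2k^2}{2}=\tfrac{m^2n}{2}\sum_j(r_j-\tfrac{k}{n})^2=\tfrac{m^2n}{2}|\pi_H(r)|^2$ (valid when $\sum_j r_j=k$), together with $\langle r,y\rangle=\langle\pi_H(r),y\rangle$ coming from $\sum_j y_j=0$, then rewrites the dual Gaussian sum precisely as the double sum in \eqref{xsf}.

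The main obstacle is the $(n-1)$-dimensional Poisson summation in the middle step: one must correctly compute the covolume $\sqrt n$ of $A_{n-1}$, track the Gaussian Fourier-transform constant and the branch of the square root hidden in $(-i\tau)^{(n-1)/2}$, and above all recognize the coset decomposition of $A_{n-1}^*$ as the source of the outer sum $\sum_{k=0}^{n-1}$ and the shifted constraint $\sum_j r_j=k$ in \eqref{xsf}. A reader preferring to avoid lattice language can reach the same identity by applying the one-dimensional Jacobi imaginary transformation $n-1$ times after eliminating $r_n=-(r_1+\cdots+r_{n-1})$, but the coset bookkeeping then reappears as the sorting of the resulting $(n-1)$-fold sum according to the value of $\sum_j r_j$ modulo $n$.
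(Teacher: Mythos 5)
Your proposal is correct, but it reaches \eqref{xsf} by a genuinely different route from the paper. The first stage coincides: like you, the author takes \eqref{clii} together with \eqref{xsg} as given from Chan and Liu (your reparametrization $y_j\mapsto y_j/(\pi\tau)$ is exactly the right dictionary between the two statements). The divergence is in the second stage. The paper never returns to \eqref{xsg}: it expands the left-hand side of \eqref{clii} by the series for $\theta_3$, splits the outer index $0\le k\le mn-1$ as $km+l$ with $0\le l\le m-1$ and $0\le k\le n-1$, observes that the terms independent of $z$ arise only from $l=0$ together with $r_1+\cdots+r_n+k=0$, and equates them with the constant term $F_{m,n}$ of the right-hand side --- an elementary Fourier-coefficient extraction that leans on the validity of \eqref{clii} itself. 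You instead prove the equality of \eqref{xsg} and \eqref{xsf} directly, by Poisson summation over the root lattice $A_{n-1}=\{r\in\Z^n:\sum_j r_j=0\}$, with the outer sum $\sum_{k=0}^{n-1}$ and the shifted constraints $\sum_j r_j=k$ emerging from the coset decomposition $A_{n-1}^*/A_{n-1}\cong\Z/n\Z$. Your bookkeeping checks out: the covolume $\sqrt n$, the factor $(\pi/a)^{(n-1)/2}=(m^2n)^{(n-1)/2}(-i\tau)^{(n-1)/2}$, the cancellation of $e^{\pm i(y_1^2+\cdots+y_n^2)/(\pi m^2 n\tau)}$ against the prefactor of \eqref{xsg}, the collapse of all normalizing constants to $1$ via $mn=\sqrt n\,(m^2n)^{1/2}$, and the on-coset identity $\tfrac{m^2n}{2}\sum_j r_j^2-\tfrac{m^2k^2}{2}=\tfrac{m^2n}{2}|\pi_H(r)|^2$ are all correct. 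As for what each buys: the paper's argument is shorter, uses nothing beyond series rearrangement, and shows how the corrected coefficient is \emph{discovered}; yours requires heavier machinery (lattice duality, complex Gaussian Fourier transforms, branch tracking) but is logically independent of \eqref{clii}, so it serves as a standalone verification that the two expressions for $F_{m,n}$ agree --- and it explains structurally why the correct formula must have the coset shape that Chan and Liu's original Eq.\ (1.10) lacked.
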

\begin{proof}For  the proof of
\eqref{clii} and \eqref{xsg}, see \cite{chsl}. Obviously,
\eqref{clii} can also be proved directly  by the theory of Elliptic
functions like the proof of \cite[Theorem 4]{chsl}.
 We only prove
\eqref{xsf} in detail. Applying the series expansion of
$\theta_3(z|\tau)$ to
 \eqref{clii}, the left hand side of \eqref{clii}
equals
\begin{eqnarray*}
\lefteqn{\sum_{k=0}^{mn-1}q^{k^2\over2}e^{2kiz}\prod_{j=1}^{n}
\sum_{r_j=-\infty}^\infty q^{{m^2n r_{j}^2\over2}}e^{2ir_j(mz+y_j+km\pi\tau)}}\\
&=&\sum_{k=0}^{mn-1}\sum_{r_1,\cdots,r_n=-\infty}^\infty
q^{{1\over2}[k^2+m^2n(r_1^2+r_2^2+\cdots+r_n^2)]+km(r_1+\cdots+r_n)}\\&&
\cdot e^{2i(r_1y_1+r_2y_2+\cdots+r_ny_n)}e^{2imz\
(r_1+r_2+\cdots+r_n)+2kiz}\\
&=&\sum_{l=0}^{m-1}\sum_{k=0}^{n-1}\sum_{r_1,\cdots,r_n=-\infty}^\infty
q^{{1\over2}[(km+l)^2+m^2n(r_1^2+r_2^2+\cdots+r_n^2)]+(km+l)m(r_1+\cdots+r_n)}\\&&
\cdot e^{2i(r_1y_1+r_2y_2+\cdots+r_ny_n)}e^{2imz\
(r_1+r_2+\cdots+r_n+k)+2ilz}\\
&=&\sum_{k=0}^{n-1}\sum_{r_1,\cdots,r_n=-\infty}^\infty
q^{{1\over2}[k^2m^2+m^2n(r_1^2+r_2^2+\cdots+r_n^2)]+km^2(r_1+\cdots+r_n)}\\&&
\cdot e^{2i(r_1y_1+r_2y_2+\cdots+r_ny_n)}e^{2imz\
(r_1+r_2+\cdots+r_n+k)}\\
&&+\sum_{k=0}^{n-1}\sum_{r_1,\cdots,r_n=-\infty}^\infty
q^{{1\over2}[(km+1)^2+m^2n(r_1^2+r_2^2+\cdots+r_n^2)]+(km+1)m(r_1+\cdots+r_n)}\\&&
\cdot e^{2i(r_1y_1+r_2y_2+\cdots+r_ny_n)}e^{2imz\
(r_1+r_2+\cdots+r_n+k)+2iz}\\
&&+\cdots\cdots\cdots\cdots\cdots\cdots\cdots\cdots\cdots\cdots\cdots\cdots\cdots\cdots\cdots\cdots\\
&&+\sum_{k=0}^{n-1}\sum_{r_1,\cdots,r_n=-\infty}^\infty
q^{{1\over2}[(km+m-1)^2+m^2n(r_1^2+r_2^2+\cdots+r_n^2)]+(km+m-1)m(r_1+\cdots+r_n)}\\&&
\cdot e^{2i(r_1y_1+r_2y_2+\cdots+r_ny_n)}e^{2imz\
(r_1+r_2+\cdots+r_n+k)+2iz(m-1)}.
\end{eqnarray*}
Note that, in the last identity above, the term independent of $z$
is produced only from the first sum. The right hand side of
\eqref{clii} equals
\begin{equation*}
F_{m,n}(y_1,y_2,\cdots,y_n|\tau)\sum_{n=-\infty}^{\infty}q^{n^2\over2}e^{2niz}.
\end{equation*}
Then equating the terms that are independent of $z$ on both sides of
\eqref{clii}, we find that
\begin{eqnarray*}
\lefteqn{F_{m,n}(y_1,y_2,\cdots,y_n|\tau)}\\
&=&\sum_{k=0}^{n-1}\sum_{\substack{r_1,r_2,\cdots, r_n
=-\infty\\
r_1+r_2+\cdots +r_n=-k}}^\infty
q^{{1\over2}[k^2m^2+m^2n(r_1^2+r_2^2+\cdots+r_n^2)]-k^2m^2}
e^{2i(r_1y_1+r_2y_2+\cdots+r_ny_n)}\\
&=&\sum_{k=0}^{n-1}q^{-{m^2k^2\over2}}\sum_{\substack{r_1,r_2,\cdots,
r_n
=-\infty\\
r_1+r_2+\cdots +r_n=k}}^\infty
q^{{1\over2}m^2n(r_1^2+r_2^2+\cdots+r_n^2)}
e^{-2i(r_1y_1+r_2y_2+\cdots+r_ny_n)}.\\
\end{eqnarray*}
This completes the proof.
\end{proof}
\section{The imaginary transformation formulaes for a type of generalized cubic
theta functions} We rewrite \eqref{xsg} and \eqref{xsf} as the
following corollary.
\begin{cor}\label{fg}For $n$ complex numbers $y_1,y_2,\cdots,y_n$  such that $y_1+
y_2+ \cdots+ y_n=0$ and the function
$G_{m,n}(y_1,y_2,\cdots,y_n|\tau)$ defined by \eqref{gmn}, we have
\begin{eqnarray}\label{xsff}\\
\lefteqn{G_{m,n}\left({y_1\over m^2n\tau},\cdots,{y_n\over
   m^2n\tau}\Big|-{1\over m^2n\tau}\right)}\notag\\
&=&\frac{(m^2n)^{n\over2}}{(-i\tau)^{1-n\over2}}e^{\frac{(y_1^2+\cdots+y_n^2)i}{m^2n\pi\tau}}
\sum_{k=0}^{n-1}q^{-{m^2k^2\over2}}\sum_{\substack{r_1,r_2,\cdots,
r_n
=-\infty\\
r_1+r_2+\cdots +r_n=k}}^\infty
q^{{1\over2}m^2n(r_1^2+r_2^2+\cdots+r_n^2)}e^{-2i(r_1y_1+r_2y_2+\cdots+r_ny_n)}.\notag
\end{eqnarray}
\end{cor}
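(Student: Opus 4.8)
The plan is to recognize that Corollary~\ref{fg} is a purely algebraic rearrangement of Theorem~\ref{cli}: both \eqref{xsg} and \eqref{xsf} are expressions for one and the same quantity $F_{m,n}(y_1,\ldots,y_n|\tau)$, so their right-hand sides coincide, and the entire task is to solve the resulting equation for the single factor $G_{m,n}\!\left(\frac{y_1}{m^2n\tau},\ldots,\frac{y_n}{m^2n\tau}\big|-\frac{1}{m^2n\tau}\right)$.

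Concretely, I would first equate the right-hand sides of \eqref{xsg} and \eqref{xsf}, obtaining
\[
\frac{(-i\tau)^{(1-n)/2}}{(m^2n)^{n/2}}\,e^{\frac{y_1^2+\cdots+y_n^2}{m^2n\pi\tau i}}\,G_{m,n}\!\left(\tfrac{y_1}{m^2n\tau},\ldots,\tfrac{y_n}{m^2n\tau}\Big|-\tfrac{1}{m^2n\tau}\right)=\sum_{k=0}^{n-1}q^{-\frac{m^2k^2}{2}}\!\!\sum_{\substack{r_1+\cdots+r_n=k}}q^{\frac{m^2n(r_1^2+\cdots+r_n^2)}{2}}e^{-2i(r_1y_1+\cdots+r_ny_n)}.
\]
I would then divide through by the prefactor $\frac{(-i\tau)^{(1-n)/2}}{(m^2n)^{n/2}}e^{\frac{y_1^2+\cdots+y_n^2}{m^2n\pi\tau i}}$, i.e.\ multiply both sides by $\frac{(m^2n)^{n/2}}{(-i\tau)^{(1-n)/2}}e^{-\frac{y_1^2+\cdots+y_n^2}{m^2n\pi\tau i}}$, leaving $G_{m,n}$ alone on the left and reproducing the double sum of \eqref{xsff} on the right, decorated by this new leading factor.

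The only step that is not completely mechanical is simplifying the exponent of the leading factor. Since $\frac{1}{i}=-i$, one has $-\frac{y_1^2+\cdots+y_n^2}{m^2n\pi\tau i}=\frac{(y_1^2+\cdots+y_n^2)i}{m^2n\pi\tau}$, so the factor becomes exactly $e^{\frac{(y_1^2+\cdots+y_n^2)i}{m^2n\pi\tau}}$, matching \eqref{xsff}. I expect no genuine obstacle here: the computation is driven entirely by Theorem~\ref{cli}, and the sole point demanding care is this sign bookkeeping for the imaginary unit in the exponent.
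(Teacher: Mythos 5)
Your proposal is correct and is exactly what the paper does: the corollary is obtained by equating the two expressions \eqref{xsg} and \eqref{xsf} for $F_{m,n}(y_1,\ldots,y_n|\tau)$ from Theorem \ref{cli} and solving for $G_{m,n}$, with the sign bookkeeping $-\frac{1}{i}=i$ giving the stated exponential factor. The paper presents this as a mere rewriting of Theorem \ref{cli}, so your argument matches its route precisely.
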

Setting $m=1$ and $n=3$ in Corollary \ref{fg}, and then, replacing
$\tau$ by $\tau\over3$ , we obtain
\begin{eqnarray*}\\
\lefteqn{G_{1,3}\left({y_1\over \tau},{y_2\over \tau},{-y_1-y_2\over
  \tau}\Big|-{1\over \tau}\right)}\notag\\
&=&-\sqrt{3}i\tau\ e^{\frac{2i(y_1^2+y_2^2+y_1y_2)}{\pi\tau}}
\sum_{k=0}^{2}q^{-{k^2\over6}}\sum_{\substack{r_1,r_2, r_3
=-\infty\\
r_1+r_2 +r_3=k}}^\infty
q^{{1\over2}(r_1^2+r_2^2+r_3^2)}e^{-2i[r_1y_1+r_2y_2-r_3(y_1+y_2)]}\notag\\
&=&-\sqrt{3}i\tau\
e^{\frac{2i(y_1^2+y_2^2+y_1y_2)}{\pi\tau}}\left[\sum_{r_1,r_2
=-\infty}^\infty
q^{r_1^2+r_2^2+r_1r_2}e^{-2ir_1(2y_1+y_2)-2ir_2(y_1+2y_2)}\right.\\
&&+\sum_{r_1,r_2 =-\infty}^\infty
q^{r_1^2+r_2^2+r_1r_2-r_1-r_2+{1\over3}}e^{-2ir_1(2y_1+y_2)-2ir_2(y_1+2y_2)+2i(y_1+y_2)}\\
&&+\left. \sum_{r_1,r_2 =-\infty}^\infty
q^{r_1^2+r_2^2+r_1r_2-2r_1-2r_2+{4\over3}}e^{-2ir_1(2y_1+y_2)-2ir_2(y_1+2y_2)+4i(y_1+y_2)}\right].
\end{eqnarray*}
In the square brackets of the last identity above, replace $r_1$ and
$r_2$ in the first and second sums by $-r_1$ and $-r_2$,
respectively, and in the last sum, replace $r_1$ and $r_2$ by
 $r_1+1$ and $r_2+1$, respectively. Then we
obtain
\begin{eqnarray}\label{xsfff}
\lefteqn{G_{1,3}\left({y_1\over \tau},{y_2\over \tau},{-y_1-y_2\over
  \tau}\Big|-{1\over \tau}\right)}\\
&=&-\sqrt{3}i\tau\
e^{\frac{2i(y_1^2+y_2^2+y_1y_2)}{\pi\tau}}\left[\sum_{r_1,r_2
=-\infty}^\infty
q^{r_1^2+r_2^2+r_1r_2}e^{2ir_1(2y_1+y_2)+2ir_2(y_1+2y_2)}\right.\notag\\
&&+e^{2i(y_1+y_2)}\sum_{r_1,r_2 =-\infty}^\infty
q^{r_1^2+r_2^2+r_1r_2+r_1+r_2+{1\over3}}e^{2ir_1(2y_1+y_2)+2ir_2(y_1+2y_2)}\notag\\
&&+\left. e^{-2i(y_1+y_2)}\sum_{r_1,r_2 =-\infty}^\infty
q^{r_1^2+r_2^2+r_1r_2+r_1+r_2+{1\over3}}e^{-2ir_1(2y_1+y_2)-2ir_2(y_1+2y_2)}\right].\notag
\end{eqnarray}
\begin{de}\label{cubic}
The cubic theta functions are defined as
\begin{eqnarray}
a(x,y|\tau)&:=&\sum_{m,n =-\infty}^\infty
q^{m^2+n^2+mn}e^{2im(2x+y)+2in(x+2y)},\label{axy}\\
b(x,y|\tau)&:=&\sum_{m,n =-\infty}^\infty
\omega^{m-n}q^{m^2+n^2+mn}e^{2im(2x+y)+2in(x+2y)},\label{bxy}\\
c(x,y|\tau)&:=&\sum_{m,n =-\infty}^\infty
q^{m^2+n^2+mn+m+n+{1\over3}}e^{2im(2x+y)+2in(x+2y)},\label{cxy}
\end{eqnarray}
where $\omega:=e^{2i\pi\over3}$.
\end{de}
We have followed Chan and Liu \cite[Definition 1]{chsl} in the
definition of $a(x,y|\tau)$ above. For more discussions on cubic
theta functions of three variables, see \cite{ bha, chap, yang}

Direct computation can verify that
\begin{equation*}
b(x,y|\tau)=a\left(x,y+{\pi\over3}\big|\tau\right)
\quad\mbox{and}\quad
c(x,y|\tau)=q^{1\over3}a\left(x+{\pi\tau\over3},y+{\pi\tau\over3}\big|\tau\right).
\end{equation*}
By \eqref{gmn} and \eqref{axy}, we obtain
\begin{equation*}
G_{1,3}(y_1,y_2,-y_1-y_2|\tau)=a(y_1,y_2|\tau).
\end{equation*}
Then from \eqref{xsfff} and the discussion above, we get the
following identities.
\begin{prop}For any complex numbers $x$ and $y$, we have
\begin{eqnarray*}
\lefteqn{a\left({x\over\tau},{y\over\tau}\big|-{1\over\tau}\right)}\\&=&-\sqrt{3}i\tau\
e^{\frac{2i(x^2+y^2+xy)}{\pi\tau}}\left[a(x,y|\tau)+e^{2i(x+y)}c(x,y|\tau)+
e^{-2i(x+y)}c(-x,-y|\tau)\right],\\
\lefteqn{c\left({x\over\tau},{y\over\tau}\big|-{1\over\tau}\right)}\\
&=&\mbox{\small$-\sqrt{3}i\tau\
e^{\frac{2i(x^2+y^2+xy)}{\pi\tau}-{2i(x+y)\over\tau}}\left[a(x,y|\tau)+\omega
e^{2i(x+y)}c(x,y|\tau)+\omega^{2}
e^{-2i(x+y)}c(-x,-y|\tau)\right]$}.
\end{eqnarray*}

\end{prop}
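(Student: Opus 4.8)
The first identity is essentially a transcription of \eqref{xsfff} into cubic-theta notation, so I would dispose of it first. Using the identity $G_{1,3}(y_1,y_2,-y_1-y_2|\tau)=a(y_1,y_2|\tau)$ recorded just above the statement, the left-hand side of \eqref{xsfff} with $(y_1,y_2)=(x,y)$ is exactly $a(x/\tau,y/\tau\,|-1/\tau)$. On the right I would identify the three double sums inside the square brackets of \eqref{xsfff}: the first is $a(x,y|\tau)$ by \eqref{axy}, the second is $c(x,y|\tau)$ by \eqref{cxy}, and the third, after reading off the signs in its exponential, is $c(-x,-y|\tau)$. Factoring out $e^{\pm 2i(x+y)}$ then reproduces the first formula verbatim.

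For the second identity the plan is to reduce $c$ to $a$ via the relation $c(x,y|\tau)=q^{1/3}a(x+\pi\tau/3,y+\pi\tau/3|\tau)$ and then feed the result into the first identity. Writing $\tilde\tau=-1/\tau$ and $\tilde q=e^{2\pi i\tilde\tau}$ (so that the natural branch gives $\tilde q^{1/3}=e^{-2\pi i/(3\tau)}$), this relation yields
\[
c\!\left(\tfrac{x}{\tau},\tfrac{y}{\tau}\Big|-\tfrac1\tau\right)
=\tilde q^{1/3}\,a\!\left(\tfrac{x}{\tau}+\tfrac{\pi\tilde\tau}{3},\,\tfrac{y}{\tau}+\tfrac{\pi\tilde\tau}{3}\Big|-\tfrac1\tau\right)
=e^{-2\pi i/(3\tau)}\,a\!\left(\tfrac{x-\pi/3}{\tau},\,\tfrac{y-\pi/3}{\tau}\Big|-\tfrac1\tau\right),
\]
since $\pi\tilde\tau/3=-\pi/(3\tau)$. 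I would then apply the already-proved first identity with $x,y$ replaced by $x-\pi/3$ and $y-\pi/3$.

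Two simplifications collapse the resulting bracket to the desired shape. First, shifting both arguments of $a$ (or of $c$) by $\pm\pi/3$ changes $2x+y$ and $x+2y$ by $\pm\pi$, hence multiplies each summand by $e^{\pm2\pi i(m+n)}=1$; thus $a(x-\pi/3,y-\pi/3|\tau)=a(x,y|\tau)$, $c(x-\pi/3,y-\pi/3|\tau)=c(x,y|\tau)$, and likewise $c(-x+\pi/3,-y+\pi/3|\tau)=c(-x,-y|\tau)$. Second, the leftover phases factor as $e^{\pm2i((x-\pi/3)+(y-\pi/3))}=e^{\pm2i(x+y)}e^{\mp4\pi i/3}$; since $e^{-4\pi i/3}=\omega$ and $e^{4\pi i/3}=\omega^2$, the bracket becomes exactly $a(x,y|\tau)+\omega\,e^{2i(x+y)}c(x,y|\tau)+\omega^2 e^{-2i(x+y)}c(-x,-y|\tau)$, matching the claim.

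The one place demanding care — and the main bookkeeping obstacle — is the prefactor. I would expand the quadratic exponent at the shifted arguments,
\[
\left(x-\tfrac{\pi}{3}\right)^2+\left(y-\tfrac{\pi}{3}\right)^2+\left(x-\tfrac{\pi}{3}\right)\!\left(y-\tfrac{\pi}{3}\right)=(x^2+y^2+xy)-\pi(x+y)+\tfrac{\pi^2}{3},
\]
so that the factor $e^{2i[\,\cdots\,]/(\pi\tau)}$ supplied by the first identity equals $e^{2i(x^2+y^2+xy)/(\pi\tau)}\,e^{-2i(x+y)/\tau}\,e^{2\pi i/(3\tau)}$. The stray factor $e^{2\pi i/(3\tau)}$ cancels precisely against the $e^{-2\pi i/(3\tau)}$ coming from $\tilde q^{1/3}$, leaving the prefactor $-\sqrt3\,i\tau\,e^{2i(x^2+y^2+xy)/(\pi\tau)-2i(x+y)/\tau}$. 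Combining this prefactor with the bracket computed above gives the second identity, completing the proof.
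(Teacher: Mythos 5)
Your proposal is correct and follows essentially the same route the paper intends: the first identity is \eqref{xsfff} rewritten via $G_{1,3}(y_1,y_2,-y_1-y_2|\tau)=a(y_1,y_2|\tau)$ and the definitions \eqref{axy}, \eqref{cxy}, and the second is obtained by combining the relation $c(x,y|\tau)=q^{1/3}a\left(x+\frac{\pi\tau}{3},y+\frac{\pi\tau}{3}\big|\tau\right)$ with the first identity at shifted arguments. The paper leaves these details unstated (``from \eqref{xsfff} and the discussion above''), and your careful bookkeeping of the branch of $\tilde q^{1/3}$, the periodicity $a(x-\pi/3,y-\pi/3|\tau)=a(x,y|\tau)$, and the cancellation of $e^{\pm 2\pi i/(3\tau)}$ supplies exactly the omitted verification.
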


\begin {thebibliography}{99}
\bibitem{bha}
S. Bhargava, Unification of the cubic analogues of the Jacobian
theta function, J. Math. Anal. Appl. 193 (1995) 543--558.

%

\bibitem{bgzz}
M. Boon, M.L. Glasser, J. Zak, J. Zucker, Additive decompositions of
$\theta$-functions of multiple arguments, J. Phys. A 15 (1982)
3439--3440.

\bibitem{chln}
H.H. Chan, Z.-G. Liu, S.T. Ng, Circular summation of theta functions
in Ramanujan's lost notebook, J. Math. Anal. Appl. 316 (2006)
628--641.

\bibitem{chsl}
S.H. Chan, Z.-G. Liu,  On a new circular summation of theta
functions, J. Number Theory 130 (2010) 1190--1196.

\bibitem{chap}
R. Chapman, Cubic identities for theta series in three variables,
Ramanujan J. 8 (2005) 459--465

%

\bibitem{ram}
S. Ramanujan, The lost notebook and other unpublished papers,
Narosa, New Delhi, 1988.

\bibitem{yang}
X.-M. Yang, The products of three theta functions and the general
cubic theta functions, Acta Math. Sin. (English Series) 26 (2010)
no. 6 1115--1124.

\bibitem{ze}
X.-F. Zeng, A generalized circular summation of theta function and
its application, J. Math. Anal. Appl. 356 (2009) 698--703.

\end{thebibliography}

\end{document}